 \def\inf{\operatorname{inf}}
 \def\sup{\operatorname{sup}}
 \def\max{\operatorname{max}}
 \newtheorem{lemma}{Lemma}[section]
 \newtheorem{theorem}[lemma]{Theorem}
 \newtheorem{proposition}[lemma]{Proposition}
 \newtheorem{definition}[lemma]{Definition}
\begin{document}


\baselineskip=18pt
\title[Characterizations of strictly convex spaces]{Characterizations of strictly convex spaces and proximal uniform normal structure}


\author{Abhik Digar}
\address{Indian Institute of Technology Kanpur, Department of Mathematics \& Statistics, Kalyanpur, Uttar Pradesh, 208016, India}
\email{abhikdigar@gmail.com}

\author{G. Sankara Raju Kosuru}
\address{Indian Institute of Technology Punjab, Rupnagar, Punjab 140 001, India}
\email{raju@iitrpr.ac.in}

%





\subjclass[2010]{47H09, 46B25, 37C25.}
\keywords{Strictly convex Banach space, Kadec-Klee property, approximatively compact, proximal uniform normal structure, best proximity pairs}

\begin{abstract}
We provide a few characterizations of a strictly convex Banach space. Using this we improve the main theorem of [Digar, Abhik; Kosuru, G. Sankara Raju; Cyclic uniform Lipschitzian mappings and proximal uniform normal structure. Ann. Funct. Anal. 13 (2022)].
\end{abstract}
%
%
%
%
\maketitle
\section{Introduction and preliminaries}\label{Section1}
We first fix a few notations, which will be used in the sequel. In the next section, we prove charectrizations of strictly convex spaces and using one of these characterizations, we fix a gap in the main theorem (Theorem 5.8) of \cite{AbhikRaju2021}. Let $\mathcal{E}$ be a Banach space and let $(\mathcal{P},\mathcal{Q})$ be a pair of non-empty subsets of $\mathcal{E}.$ We denote the sets $\mathcal{P}_0=\{p\in \mathcal{P}: \|p-q\|=d(\mathcal{P}, \mathcal{Q})~\mbox{for some}~q\in \mathcal{Q}\}$ and $\mathcal{Q}_0=\{q\in \mathcal{Q}: \|p-q\|=d(\mathcal{P}, \mathcal{Q})~\mbox{for some}~p\in \mathcal{P}\},$ here $d(\mathcal{P}, \mathcal{Q})=\inf\{\|p-q\|: p\in \mathcal{P}, a\in \mathcal{Q}\}.$ Then $(\mathcal{P}, \mathcal{Q})$ is said to be proximal if $\mathcal{P}=\mathcal{P}_0$ and $\mathcal{Q}=\mathcal{Q}_0.$ Also, $(\mathcal{P}, \mathcal{Q})$ is said to be semisharp proximal (\cite{Raju2011}) if for any $w$ in $\mathcal{P}$ (similarly, in $\mathcal{Q}$), there exists $u, v$ in $\mathcal{Q}$ (similarly, in $\mathcal{P}$) such that $\|w-u\|=d(\mathcal{P}, \mathcal{Q})=\|w-v\|,$ then it is the case that $u=v.$ In this case, we denote $u$ by $w'$ (as well as $w$ by $u'$). A proximal pair is known as sharp proximal if it is semisharp. Further, the sharp proximal pair $(\mathcal{P}, \mathcal{Q})$ is parallel (\cite{Espinola2008}) if $\mathcal{Q}=\mathcal{P}+h$ for some $h\in \mathcal{E}.$ In this case, we have $p'=p+h,~q'=q-h$ for $p\in \mathcal{P}, q\in \mathcal{Q}$ and $\|h\|=d(\mathcal{P},\mathcal{Q}).$
 Suppose $\mathcal{D}$ is a non-empty subset of $\mathcal{E}.$ The metric projection $P_\mathcal{D}: \mathcal{E}\to \mathcal{D}$ is defined by $P_\mathcal{D}(x)=\{z\in \mathcal{D}: \|x-z\|=d(x, \mathcal{D})\}$ for all $x\in \mathcal{E}.$ It is known that if $\mathcal{D}$ is weakly compact and convex subset of a strictly convex Banach space, then $P_\mathcal{D}(x)$ is singleton for every $x\in \mathcal{E}.$ For $z\in \mathcal{D}$ and $r>0,$ the closed ball centered at $x$ and radius $r$ is denoted by $ \mathbb{\mathbb{B}}(x,r).$
We recall a lemma from \cite{AbhikRaju2021}.
\begin{lemma}\label{Lemma:AbhikRaju2021}
Let $(\mathcal{P},\mathcal{Q})$ be a non-empty closed convex pair in a Banach space with $(\mathcal{P},\mathcal{Q})$ semisharp proximal. Suppose that  $\{u_n\}$ is a sequence in $\mathcal{P}$ and $(u,v)$ is  a proximal pair in $(\mathcal{P},\mathcal{Q})$ such that $\displaystyle\lim_{n\to \infty}\|u_n-v\|=d(\mathcal{P}, \mathcal{Q}).$ Then $\displaystyle\lim_{n\to \infty} u_n=u.$
\end{lemma}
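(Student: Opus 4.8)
The plan is to recognize $u$ as the unique point of $\mathcal{P}$ nearest to $v$ and then to show that every minimizing sequence for this nearest-point problem must converge to it. Write $d=d(\mathcal{P},\mathcal{Q})$. Since $(u,v)$ is a proximal pair we have $\|u-v\|=d$, and because $v\in\mathcal{Q}$ while $u,u_n\in\mathcal{P}$ we get $\|u_n-v\|\ge d$ for every $n$ together with $d(v,\mathcal{P})=d$; thus $\{u_n\}$ is a minimizing sequence for the distance from $v$ to $\mathcal{P}$. Applying the semisharp proximal hypothesis to the point $v\in\mathcal{Q}$ shows that $u$ is the \emph{only} element of $\mathcal{P}$ with $\|\,\cdot\,-v\|=d$, i.e. the nearest point of $v$ in $\mathcal{P}$ is unique.

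Next I would extract convergence by a subsequence argument. Every subsequence $\{u_{n_k}\}$ is again a minimizing sequence, so it admits a further subsequence converging to some $u^{*}\in\mathcal{P}$ (here $\mathcal{P}$ being closed and convex, together with the ambient geometry discussed below, is what produces a limit inside $\mathcal{P}$). Lower semicontinuity of the norm gives $\|u^{*}-v\|\le\liminf\|u_{n_k}-v\|=d$, while $\|u^{*}-v\|\ge d$ since $u^{*}\in\mathcal{P}$; hence $\|u^{*}-v\|=d$, and by the uniqueness just established $u^{*}=u$. Since an arbitrary subsequence of $\{u_n\}$ thus has a sub-subsequence converging to the single point $u$, the whole sequence converges to $u$, as desired.

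The main obstacle is precisely the extraction of the limit $u^{*}\in\mathcal{P}$: uniqueness of the nearest point, which semisharpness supplies, does not by itself force a minimizing sequence to converge in an arbitrary Banach space, so a geometric property of $\mathcal{E}$ must enter here. A convenient route uses convexity directly: the midpoints $\tfrac12(u_m+u_n)$ lie in $\mathcal{P}$, so $\|\tfrac12(u_m+u_n)-v\|\ge d$, while the triangle inequality bounds this above by $\tfrac12\|u_m-v\|+\tfrac12\|u_n-v\|\to d$; a modulus-of-convexity estimate then forces $\|u_m-u_n\|\to 0$, making $\{u_n\}$ Cauchy with limit in the closed set $\mathcal{P}$. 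Alternatively, reflexivity lets one pass to a weakly convergent subsequence (its weak limit lies in the weakly closed convex set $\mathcal{P}$ and realizes the distance, hence equals $u$), and the Kadec--Klee property upgrades the weak convergence to norm convergence because $\|u_n-v\|\to\|u-v\|$. Either way, it is this passage, where strict or uniform convexity, approximative compactness, or the Kadec--Klee property of the space is needed, that I expect to be the crux.
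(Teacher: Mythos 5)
Your opening reduction is exactly right, and it matches the only route available: semisharp proximality applied at the point $v$ says that $u$ is the unique point of $\mathcal{P}$ at distance $d(\mathcal{P},\mathcal{Q})$ from $v$, so the lemma is precisely a well-posedness statement for the nearest-point problem of $v$ in $\mathcal{P}$ (note that one may as well take $\mathcal{Q}=\{v\}$, since no other point of $\mathcal{Q}$ can enter any argument). But the step you flag as the crux is a genuine, and in fact unfixable, gap: extracting a norm-convergent subsequence from a minimizing sequence (equivalently, showing the minimizing sequence is Cauchy) has no justification under the stated hypotheses, and every device you propose to supply it --- a modulus-of-convexity estimate, reflexivity, the Kadec--Klee property, approximative compactness --- is an assumption on $\mathcal{E}$ or on $\mathcal{P}$ that the lemma does not make. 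The statement as recorded is actually false in a general Banach space: the property ``whenever a closed convex set has a (necessarily unique) nearest point to $v$, every minimizing sequence converges to it'' is, for all such configurations, exactly strong rotundity, which is equivalent to reflexivity $+$ rotundity $+$ the Radon--Riesz (Kadec--Klee) property (see \cite{book:Megginson}); since there exist reflexive, strictly convex spaces failing Kadec--Klee, one can take such a space, a point $v$, and a closed bounded convex $\mathcal{P}$ (intersect with a large ball around $v$ if necessary) for which $(\mathcal{P},\{v\})$ is semisharp proximal, a proximal pair $(u,v)$ exists, and yet some minimizing sequence fails to converge to $u$. So no proof from the lemma's hypotheses alone can succeed: the missing step is not a missing idea but a missing hypothesis.

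For the comparison you were asked to make: there is nothing in the paper to compare against, because Lemma \ref{Lemma:AbhikRaju2021} is quoted from \cite{AbhikRaju2021} without proof, and the paper then invokes it for the implication $(b)\Rightarrow(c)$ of Theorem \ref{Thm2.1:StrictConvexityEquivalence}, which inherits the same difficulty (statement $(c)$ is strictly stronger than strict convexity in infinite dimensions, by the example above). Your two conditional arguments are both correct under their respective extra hypotheses --- the midpoint estimate $\|\tfrac12(u_m+u_n)-v\|\geq d$ combined with uniform (or local uniform) convexity does make the sequence Cauchy, and reflexivity plus Kadec--Klee does upgrade the weak subsequential limit to a norm limit --- and either of these, or (approximative) compactness of $\mathcal{P}$ as in part $(d)$ of Theorem \ref{Thm2.1:StrictConvexityEquivalence}, is the kind of added assumption under which the lemma becomes true. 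In short: your proof is incomplete exactly where you said it was, and your diagnosis of why it cannot be completed is the mathematically correct conclusion about the lemma itself.
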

The following two results were proved in \cite{EldredRaj2014}.
 
 \begin{theorem}\label{Thm1:EldredRaj2014}
 Let $\mathcal{C}$ be a non-empty closed convex subset of a strictly convex Banach space. Then $\mathcal{C}$ contains not more than one point of minimum norm.
 \end{theorem}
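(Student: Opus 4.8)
The plan is to argue by contradiction. Suppose $\mathcal{C}$ contains two distinct points $c_1\neq c_2$, each of minimum norm $r:=d(0,\mathcal{C})=\inf\{\|x\|:x\in\mathcal{C}\}$, so that $\|c_1\|=\|c_2\|=r$. First I would dispose of the degenerate case $r=0$: then $\|c_1\|=\|c_2\|=0$ forces $c_1=c_2=0$, contradicting $c_1\neq c_2$. Hence we may assume $r>0$.

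Next I would exploit convexity together with the triangle inequality to pin down the norm of the midpoint. Since $\mathcal{C}$ is convex, the midpoint $m:=\tfrac{1}{2}(c_1+c_2)$ lies in $\mathcal{C}$, so by minimality of $r$ we have $\|m\|\geq r$. On the other hand, $\|m\|\leq\tfrac{1}{2}\|c_1\|+\tfrac{1}{2}\|c_2\|=r$. Combining these gives $\|m\|=r$, which is precisely the equality case $\bigl\|\tfrac{1}{2}(c_1+c_2)\bigr\|=\tfrac{1}{2}\|c_1\|+\tfrac{1}{2}\|c_2\|$ with both summands equal to $r$.

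Finally I would invoke strict convexity after normalizing. Setting $u:=c_1/r$ and $v:=c_2/r$, we have $\|u\|=\|v\|=1$ while the previous step yields $\bigl\|\tfrac{1}{2}(u+v)\bigr\|=1$ as well. Strict convexity asserts exactly that the unit sphere contains no nondegenerate segment: if $\|u\|=\|v\|=1$ with $u\neq v$, then $\bigl\|\tfrac{1}{2}(u+v)\bigr\|<1$. Therefore $u=v$, i.e. $c_1=c_2$, the desired contradiction.

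The argument is short and the only substantive choice is phrasing the strict-convexity hypothesis in the form matching the equality case of the triangle inequality. I do not anticipate a genuine obstacle; the one point needing a separate (trivial) treatment is $r=0$, where the midpoint argument degenerates and strict convexity is not needed.
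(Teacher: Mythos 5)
Your proof is correct. Note, however, that the paper does not actually prove this statement itself: it is quoted as Theorem~\ref{Thm1:EldredRaj2014} from the reference [Sankar Raj--Eldred, J.~Optim.\ Theory Appl.\ 160 (2014)], and later used as a black box in the proof of the implication $(a)\Rightarrow(g)$ of Theorem~\ref{Thm2.1:StrictConvexityEquivalence}. Your midpoint argument is the standard proof of this fact, so there is nothing to reconcile with the paper beyond one small point of phrasing: the paper defines strict convexity as ``the unit sphere contains no non-trivial line segment,'' whereas you invoke the midpoint formulation ($\|u\|=\|v\|=1$, $u\neq v$ implies $\bigl\|\tfrac{1}{2}(u+v)\bigr\|<1$). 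These are equivalent, but the equivalence deserves one line: if $\bigl\|\tfrac{1}{2}(u+v)\bigr\|=1$, then convexity of $t\mapsto\|tu+(1-t)v\|$, together with its values $1$ at $t=0,\tfrac{1}{2},1$ and the bound $\leq 1$ throughout, forces the function to be identically $1$ on $[0,1]$, so the whole segment lies on the sphere. With that remark inserted, your argument is complete; the separate treatment of $r=0$ is correct and indeed necessary for the normalization step.
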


\begin{theorem}\label{Thm2:EldredRaj2014}
Let $(\mathcal{P},\mathcal{Q})$ be a non-empty closed convex pair in a strictly convex Banach space. Then the restriction of the metric projection operator $P_{\mathcal{P}_0}$ to $\mathcal{Q}_0$ is an isometry.
\end{theorem}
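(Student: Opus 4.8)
The plan is to show that the metric projection $P_{\mathcal{P}_0}$ restricted to $\mathcal{Q}_0$ is a well-defined bijection onto $\mathcal{P}_0$ that preserves distances. First I would note that since the ambient space is strictly convex and $\mathcal{P}_0$ is closed and convex (as $(\mathcal{P},\mathcal{Q})$ is a closed convex pair), Theorem \ref{Thm1:EldredRaj2014} guarantees that for each $q \in \mathcal{Q}_0$ the projection $P_{\mathcal{P}_0}(q)$ is a singleton; indeed, the set $\{p - q : p \in \mathcal{P}_0\}$ is closed and convex, so it contains at most one point of minimum norm. Moreover, by definition of $\mathcal{Q}_0$, for each $q \in \mathcal{Q}_0$ there exists $p \in \mathcal{P}_0$ with $\|p - q\| = d(\mathcal{P},\mathcal{Q})$, and since $d(\mathcal{P},\mathcal{Q})$ is the minimal possible distance, this $p$ is precisely $P_{\mathcal{P}_0}(q)$. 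Thus the map sends $q$ to its unique best proximity partner, and symmetrically each $p \in \mathcal{P}_0$ has a unique partner in $\mathcal{Q}_0$; this pairing is the map we analyze.

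The core of the argument is the isometry estimate. Given $q_1, q_2 \in \mathcal{Q}_0$ with images $p_1 = P_{\mathcal{P}_0}(q_1)$ and $p_2 = P_{\mathcal{P}_0}(q_2)$, I want to prove $\|p_1 - p_2\| = \|q_1 - q_2\|$. The key idea is to exploit the midpoints: set $p = \tfrac{1}{2}(p_1 + p_2) \in \mathcal{P}_0$ and $q = \tfrac{1}{2}(q_1 + q_2) \in \mathcal{Q}_0$, which lie in the respective sets by convexity. Since $\|p_i - q_i\| = d(\mathcal{P},\mathcal{Q})$ for $i = 1,2$, the triangle inequality gives
\[
\|p - q\| \le \tfrac{1}{2}\|p_1 - q_1\| + \tfrac{1}{2}\|p_2 - q_2\| = d(\mathcal{P},\mathcal{Q}),
\]
and since $d(\mathcal{P},\mathcal{Q})$ is the infimal distance this is an equality. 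Equality in the triangle inequality in a strictly convex space forces the two vectors $p_1 - q_1$ and $p_2 - q_2$ to be \emph{equal} (they have the same norm $d(\mathcal{P},\mathcal{Q})$ and their average also has that norm), whence $p_1 - q_1 = p_2 - q_2$. Rearranging yields $p_1 - p_2 = q_1 - q_2$, which gives both $\|p_1 - p_2\| = \|q_1 - q_2\|$ and the injectivity of the map simultaneously.

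Surjectivity onto $\mathcal{P}_0$ follows because every $p \in \mathcal{P}_0$ arises as the best proximity partner of some $q \in \mathcal{Q}_0$ by the definition of $\mathcal{P}_0$, and that partner is unique by the same strict-convexity argument applied with the roles of $\mathcal{P}_0$ and $\mathcal{Q}_0$ reversed. Combining the displayed norm equality with bijectivity establishes that $P_{\mathcal{P}_0}\big|_{\mathcal{Q}_0}$ is an isometry onto $\mathcal{P}_0$.

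I expect the main obstacle to be the rigorous justification of the step ``equality in the triangle inequality forces $p_1 - q_1 = p_2 - q_2$.'' One must be careful: strict convexity says that if $\|a\| = \|b\| = \|\tfrac{1}{2}(a+b)\|$ with $a, b$ nonzero, then $a = b$. Here $a = p_1 - q_1$ and $b = p_2 - q_2$ each have norm $d(\mathcal{P},\mathcal{Q})$, and the computation above shows $\|\tfrac{1}{2}(a+b)\| = d(\mathcal{P},\mathcal{Q})$ as well, so the hypothesis applies provided $d(\mathcal{P},\mathcal{Q}) > 0$. The degenerate case $d(\mathcal{P},\mathcal{Q}) = 0$ (where $\mathcal{P}_0 = \mathcal{Q}_0 = \mathcal{P}\cap\mathcal{Q}$ and the map is the identity) should be handled separately as a trivial instance; verifying that this boundary case is either excluded or harmless is the only genuinely delicate bookkeeping in the proof.
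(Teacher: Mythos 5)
Your proof is correct. Note that the paper itself offers no proof of this statement: it is recalled verbatim from \cite{EldredRaj2014}, so there is no internal argument to compare against; your write-up in fact reconstructs the standard proof from that reference. The midpoint computation $\left\|\tfrac{1}{2}(p_1+p_2)-\tfrac{1}{2}(q_1+q_2)\right\|=d(\mathcal{P},\mathcal{Q})$ combined with strict convexity forces $p_1-q_1=p_2-q_2$, which not only yields the isometry but the stronger conclusion that $(\mathcal{P}_0,\mathcal{Q}_0)$ is a parallel pair ($\mathcal{Q}_0=\mathcal{P}_0+h$ with $\|h\|=d(\mathcal{P},\mathcal{Q})$), exactly the fact this paper quotes from \cite{Espinola2008} in Section \ref{Section1}. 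Two small points of bookkeeping: your claim that $\{p-q: p\in\mathcal{P}_0\}$ is \emph{closed} is not justified (in a nonreflexive space $\mathcal{P}_0$ need not be closed), so Theorem \ref{Thm1:EldredRaj2014} does not literally apply; but this is harmless, since uniqueness of a nearest point in a \emph{convex} set follows directly from strict convexity without any closedness hypothesis. Also, your handling of the degenerate case $d(\mathcal{P},\mathcal{Q})=0$ (where the restricted projection is the identity on $\mathcal{P}\cap\mathcal{Q}$) is correct and is indeed needed, since the strict-convexity step requires the vectors $p_i-q_i$ to be nonzero.
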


\section{Characterizations of strictly convex spaces}\label{Section2}
 Let $(\mathcal{P}, \mathcal{Q})$ be a non-empty pair in  a Banach space $\mathcal{E}.$ For $w\in \mathcal{E},$ a sequence $\{p_n\}$ in $\mathcal{P}$ is said to be minimizing with respect to $w$ if $\displaystyle\lim_{n\to \infty}\|p_n-w\|=d(w,\mathcal{P}).$ Also, the sequence is said to be minimizing with respect to $\mathcal{Q}$ if $d(p_n, \mathcal{Q})\to d(\mathcal{P}, \mathcal{Q}).$ 
 It is well-known that $\mathcal{P}$ is approximatively compact if every minimizing sequence in $\mathcal{P}$ has a convergent subsequence (\cite{book:Megginson}). 
 We say that $\mathcal{P}$ is weak approximatively compact with respect to $\mathcal{Q}$ if every minimizing sequence in $\mathcal{P}$ with respect to $\mathcal{Q}$ has a weak convergent subsequence in $\mathcal{P}.$ The pair $(\mathcal{P}, \mathcal{Q})$ is weak approximatively compact if $\mathcal{P}$ and $\mathcal{Q}$ are weak approximatively compact with respect to $\mathcal{Q}$ and $\mathcal{P}$ respectively.
The pair $(\mathcal{P}, \mathcal{Q})$ is said to have the property UC (\cite{Suzuki2009}) if $\{u_n\}, \{v_n\}$ are sequences in $\mathcal{P}$ and $\{q_n\}$ is a sequence in $\mathcal{Q}$ with $\displaystyle \lim_{n\to \infty} \|u_n-q_n\|=d(\mathcal{P}, \mathcal{Q})=\displaystyle \lim_{n\to \infty} \|v_n-q_n\|,$ then it is the case that $\displaystyle\lim_{n\to \infty} \|u_n- v_n\|=0.$
 It is easy to see that if a non-empty pair has property UC, then it is semisharp proximal. 
The Banach space $\mathcal{E}$ is said to have the Kadec-Klee property (\cite{book:Megginson}) if for any sequence on the unit sphere $S_\mathcal{E}$ the weak and strong convergence concide. 

A normed linear space is said to be strictly convex if its unit sphere contains no non-trivial line segement. The following result provides the characterizations of a strictly convex Banach space.
\begin{theorem}\label{Thm2.1:StrictConvexityEquivalence}
Let $\mathcal{E}$ be a Banach space. Then the following statements are equivalent:
\begin{enumerate}
\item[(a)] $\mathcal{E}$ is strictly convex.
\item[(b)] Suppose $(\mathcal{P},\mathcal{Q})$ is a non-empty closed bounded convex pair in  $\mathcal{E}$. Then $(\mathcal{P},\mathcal{Q})$ is semisharp proximal.
\item[(c)] If $(\mathcal{P},\mathcal{Q})$ is a non-empty closed bounded convex pair in  $\mathcal{E}$ and $(p,q)\in (\mathcal{P},\mathcal{Q}).$ Suppose $\{p_n\}$ is a sequence in $\mathcal{P}$ such that $\displaystyle\lim_{n\to \infty}\|p_n-q\|=d(\mathcal{P}, \mathcal{Q})=\|p-q\|.$ Then $\displaystyle\lim_{n\to \infty}p_n=p.$
\item[(d)] If $\mathcal{P}$ and $\mathcal{Q}$ are compact subsets of $\mathcal{E}$ with $\mathcal{P}$ convex, then $(\mathcal{P},\mathcal{Q})$ has property UC.
\item[(e)] Suppose $(\mathcal{P},\mathcal{Q})$ is a non-empty closed bounded convex pair in  $\mathcal{E}$. If $\mathcal{E}$ has the Kadec-Klee property and $(\mathcal{P},\mathcal{Q})$ is weak approximatively compact, then $(\mathcal{P},\mathcal{Q})$ has property UC.
\item[(f)] Suppose $(\mathcal{P},\mathcal{Q})$ is a non-empty closed bounded convex pair in  $\mathcal{E}$. For $p,w\in \mathcal{P}$ and $q\in \mathcal{Q},$ with $\|p-q\|=d(q,\mathcal{P})$ and $\|w-q\|=d(q,\mathcal{P}),$ we have $p=w.$
\item[(g)] If $\mathcal{P}$ is a non-empty closed convex subset of  $\mathcal{E},$ then it has no more than one element of minimum norm.
\item[(h)] If $(\mathcal{P},\mathcal{Q})$ is a non-empty closed bounded convex pair in  $\mathcal{E}$ and $\mathcal{P}_0\neq \emptyset$, then the restriction of the metric projection $P_{\mathcal{P}_0}: \mathcal{Q}_0\to \mathcal{P}_0$ is an isometry. 
\item[(i)] Let $(\mathcal{P},\mathcal{Q})$ be a non-empty closed bounded convex pair in  $\mathcal{E}$. For any $p\in \mathcal{P},$ the closed ball $\mathbb{\mathbb{B}}(p,d(\mathcal{P},\mathcal{Q}))$ intersects $\mathcal{Q}$ at not more than one point. 
\end{enumerate}
\end{theorem}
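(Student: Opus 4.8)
The plan is to prove Theorem~\ref{Thm2.1:StrictConvexityEquivalence} through a web of implications centred on (a), using as the single analytic engine the uniqueness of the best approximation to a point from a convex set in a strictly convex space, together with the three results quoted above. I would begin by recording the elementary reformulations that cost nothing. A point $q\in\mathcal{Q}$ lying in $\mathbb{B}(p,d(\mathcal{P},\mathcal{Q}))$ automatically satisfies $\|p-q\|=d(\mathcal{P},\mathcal{Q})$, so (i) says exactly that each $p\in\mathcal{P}$ has at most one proximal partner; since (i) is quantified over all pairs it also applies to $(\mathcal{Q},\mathcal{P})$, and the two one-sided statements together are precisely semisharpness, making (i) and (b) the same assertion. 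Likewise, writing $\mathcal{C}=\mathcal{P}-q$ identifies a best approximation to $q$ from $\mathcal{P}$ with a minimum-norm element of the closed convex set $\mathcal{C}$, so that (f) is the translate of (g) (one intersects with a large ball to reduce the unbounded case of (g) to the bounded pairs used in (f)).

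For the forward implications from (a), the standard midpoint argument shows that in a strictly convex space a convex set admits at most one best approximation to any point; this is (f), specialising the common distance to the norm gives (g) (which is Theorem~\ref{Thm1:EldredRaj2014}), and restricting it to $d(\mathcal{P},\mathcal{Q})$ gives the semisharp property (b), hence (i). For (h) I would invoke Theorem~\ref{Thm2:EldredRaj2014} directly. To obtain (c) I would first use (a)$\Rightarrow$(b) to note that the pair in (c) is semisharp proximal, and then apply Lemma~\ref{Lemma:AbhikRaju2021} with $v=q$, $u=p$: since $d(q,\mathcal{P})=d(\mathcal{P},\mathcal{Q})=\|p-q\|$, the sequence $\{p_n\}$ is exactly the minimizing sequence to which the lemma applies, yielding $p_n\to p$. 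Thus (c) rides entirely on the quoted lemma.

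The substantial forward implications are (a)$\Rightarrow$(d) and (a)$\Rightarrow$(e), and I would argue both by contradiction. Assuming $\|u_n-v_n\|\not\to 0$, pass to a subsequence with $\|u_n-v_n\|\ge\varepsilon$ and extract convergent (respectively weakly convergent) subsequences $u_n\to u$, $v_n\to v$ in $\mathcal{P}$ and $q_n\to q$ in $\mathcal{Q}$. The chain $d(\mathcal{P},\mathcal{Q})\le d(q,\mathcal{P})\le\|u-q\|=d(\mathcal{P},\mathcal{Q})$ (the last equality coming from continuity of the norm in case (d), and from weak lower semicontinuity together with $\|u_n-q_n\|\to d(\mathcal{P},\mathcal{Q})$ in case (e)) shows that $u$ and $v$ are both best approximations of $q$ from the convex set $\mathcal{P}$, whence $u=v$ by (a), contradicting $\|u-v\|\ge\varepsilon$. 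In (d), compactness delivers strong subsequential limits at once. In (e) the weak approximative compactness gives only weak limits, and the crux is to upgrade $u_n-q_n\rightharpoonup u-q$ with $\|u_n-q_n\|\to d(\mathcal{P},\mathcal{Q})$ to norm convergence: when $d(\mathcal{P},\mathcal{Q})>0$ the normalized vectors lie on $S_\mathcal{E}$ and converge weakly to a unit vector, so the Kadec--Klee property forces $u_n-q_n\to u-q$ in norm (the case $d(\mathcal{P},\mathcal{Q})=0$ being immediate from the triangle inequality), and similarly for $v_n-q_n$, so that $u_n-v_n\to u-v$. This weak-to-norm passage through Kadec--Klee is the step I expect to be the main obstacle.

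For the reverse implications I would argue contrapositively: if $\mathcal{E}$ is not strictly convex there are distinct $x,y\in S_\mathcal{E}$ with the whole segment $[x,y]$ on the unit sphere (convexity of the norm forces the full segment once the midpoint has norm one). Taking $\mathcal{P}=[x,y]$ and $\mathcal{Q}=\{0\}$, both compact with $\mathcal{P}$ convex and $d(\mathcal{P},\mathcal{Q})=1$, the choices $u_n\equiv x$, $v_n\equiv y$, $q_n\equiv 0$ violate (d), while the proximal pair $(x,0)$ with $p_n\equiv y$ violates (c); the swapped pair $\{0\},[x,y]$ breaks (b) and (i); in that configuration $\mathcal{P}_0=[x,y]$ and $\mathcal{Q}_0=\{0\}$, so the metric projection onto $\mathcal{P}_0$ is multivalued and (h) fails; and the segment furnishes infinitely many minimum-norm points, contradicting (g) and hence (f). The one delicate reverse implication is (e)$\Rightarrow$(a): the compact segment pair is automatically weak approximatively compact, so the construction applies verbatim \emph{once $\mathcal{E}$ is known to have the Kadec--Klee property}, and I would take care here to address the interaction with that hypothesis rather than treating (e) purely formally.
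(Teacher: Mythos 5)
Your proposal follows, in substance, the same route as the paper: the reverse implications all come from a nontrivial segment $[x,y]\subseteq S_\mathcal{E}$ paired with $\{0\}$; the forward ones rest on the midpoint argument for uniqueness of best approximations from a convex set (the paper's $(a)\Leftrightarrow(f)$); clause (c) is obtained from Lemma \ref{Lemma:AbhikRaju2021}, clauses (g) and (h) from Theorems \ref{Thm1:EldredRaj2014} and \ref{Thm2:EldredRaj2014}, and clauses (d), (e) from subsequence extraction plus the Kadec--Klee upgrade of weak convergence to norm convergence. The only organizational differences are that you route (d) and (e) directly through (a) rather than through (b) (immaterial, since $(a)\Leftrightarrow(b)$ is essentially formal), and that you treat $(i)\Leftrightarrow(b)$ and $(f)\Leftrightarrow(g)$ as identifications, which is legitimate. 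Two cosmetic remarks: in your (h) counterexample the roles are swapped relative to the pair you name --- for $(\mathcal{P},\mathcal{Q})=(\{0\},[x,y])$ one has $\mathcal{P}_0=\{0\}$, $\mathcal{Q}_0=[x,y]$, and the clean failure is that $P_{\mathcal{P}_0}$ collapses $x$ and $y$ to $0$ (the paper's argument), which avoids having to argue that a multivalued projection ``is not an isometry''; and your separate treatment of $d(\mathcal{P},\mathcal{Q})=0$ in the Kadec--Klee step is indeed needed and correctly disposed of by the triangle inequality.

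The one genuine gap is $(e)\Rightarrow(a)$, which you flag as delicate and leave unresolved --- and your hesitation is exactly right, because under the literal reading this implication is not provable at all. Statement (e) is a conditional whose antecedent contains ``$\mathcal{E}$ has the Kadec--Klee property''; hence (e) holds vacuously in every Banach space lacking that property. The space $c_0$ lacks it (take $x_n=e_1+e_n\in S_{c_0}$, which converges weakly but not in norm to $e_1\in S_{c_0}$) and is not strictly convex, so (e) cannot imply (a) as stated. Reading the Kadec--Klee clause instead as part of the assertion of (e) does not repair the theorem either, since then $(a)\Rightarrow(e)$ fails: there exist strictly convex spaces without the Kadec--Klee property (e.g.\ renorm $L_1[0,1]$ by $|||f|||=\|f\|_1+\|Tf\|_2$ with $T$ an injective compact operator into $\ell_2$; the functions $1+r_n$, with $r_n$ the Rademacher functions, still witness the failure of Kadec--Klee). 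You should know that the paper's own proof has precisely the defect you sensed: in its ``$(e)\Rightarrow(b)$'' step it applies property UC to the pair $\left(\overline{co}\{u,z\},\{v\}\right)$ without any knowledge that $\mathcal{E}$ has the Kadec--Klee property. So your proposal is no weaker than the paper here; the correct repair is to make Kadec--Klee a standing hypothesis for clause (e) --- i.e.\ to assert that, within the class of Kadec--Klee spaces, strict convexity is equivalent to ``every weak approximatively compact closed bounded convex pair has property UC'' --- after which your compact-segment construction (compact, hence weak approximatively compact) completes the reverse direction exactly as you describe.
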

\begin{proof} 
 We only prove the following implications: $(b)\Rightarrow (a), (b)\Leftrightarrow (d), (b)\Leftrightarrow (e), (a)\Leftrightarrow (f), (a)\Leftrightarrow (g), (a)\Leftrightarrow (h), (a)\Leftrightarrow (i).$ The proof of $(a) \Rightarrow (b),~(c) \Rightarrow (b)$ are obvious and $(b) \Rightarrow (c)$ follows from Lemma \ref{Lemma:AbhikRaju2021}. 
  
\noindent $(b)\Rightarrow (a):$ Suppose $\mathcal{E}$ is not strictly convex. Then there exists a nontrivial line segment $\mathcal{Q}=\{tc_1+(1-t)c_2: 0\leq t\leq 1\}$ in $S_\mathcal{E},$ where $c_1, c_2\in S_\mathcal{E}$ with $c_1\neq c_2.$ Set $\mathcal{P}=\{0\}.$ Then $d(\mathcal{P}, \mathcal{Q})=1.$ Clearly $\mathcal{Q}$ is non-empty closed convex subset of $\mathcal{E}$ such that $\left\|0-\left(tc_1+(1-t)c_2\right)\right\|=1$ for all $0\leq t\leq 1.$ This contradicts the semisharp proximality of $(\mathcal{P}, \mathcal{Q}).$ Thus we have $(b) \Rightarrow (a).$

\noindent $(b) \Leftrightarrow  (d):$ Let $(\mathcal{P}, \mathcal{Q})$ be a compact pair in a strictly convex Banach space with $\mathcal{P}$ convex. Suppose there exist sequences $(u_n), (z_n)$ in $\mathcal{P}$ and $(v_n)$ in $\mathcal{Q}$ such that $\max\{\|u_n-v_n\|, \|z_n-v_n\|\}\to d(\mathcal{P},\mathcal{Q}).$ Choose a subsequence $(n_j)$ and points $u, z\in \mathcal{P}, v\in \mathcal{Q}$ such that $u_{n_j}\to u, z_{n_j}\to z$ and $v_{n_j}\to v.$ Then $\|u-v\|=d(\mathcal{P},\mathcal{Q})=\|z-v\|.$ By $(b),~u=z.$ We now prove that $\|u_n-z_n\|\to 0.$ If not, then there exist $\epsilon_0>0$ and a subsequence $(n_l)$ such that $\|u_{n_l}-z_{n_l}\|>\epsilon_0$ for all $l\geq 1.$ Using earlier argument, there exist a subsequence $(n_{l_k})$ such that $u_{n_{l_k}}\to p$ and $z_{n_{l_k}}\to p$ for some $p\in \mathcal{P}$ and for all $k\geq 1.$ This is a contradiction. Conversely, assume $(d).$ Let $(\mathcal{P},\mathcal{Q})$ is a non-empty closed bounded convex pair in $\mathcal{E}.$ Let $\|u-v\|=d(\mathcal{P},\mathcal{Q})=\|z-v\|$ for some $u, z\in \mathcal{P}$ and $v\in \mathcal{Q}.$ Set $\mathcal{C}=$ $\overline{co}\{u,z\}.$ Then $(\mathcal{C},\{v\})$ is a compact convex pair in $\mathcal{E}$ with $d(\mathcal{P},\mathcal{Q})\leq d(\mathcal{C},\{v\})\leq \|u-v\|=d(\mathcal{P},\mathcal{Q}).$ By $(d),$ we have $z=u.$

\noindent $(b) \Leftrightarrow  (e):$ Assume $(b).$ Let $(\mathcal{P},\mathcal{Q})$ be a bounded convex semisharp proximal pair in $\mathcal{E}.$ Suppose $(\mathcal{P},\mathcal{Q})$ is weak approximatively compact, and $\mathcal{E}$ has the Kadec-Klee property. Let $(u_n),(z_n)$ in $\mathcal{P}$ and $(v_n)$ in $\mathcal{Q}$ be sequences such that $\|u_n- v_n\|\to d(\mathcal{P},\mathcal{Q})$ and $\|z_n-v_n\|\to d(\mathcal{P},\mathcal{Q}).$ It is clear that $(u_n), (z_n)$ in $\mathcal{P}$ and $(v_n)$ in $\mathcal{Q}$ are minimizing sequences.
Then there exists a subsequence $(n_k)$ of $(n)$ such that $u_{n_k}\to u, z_{n_k}\to z$ and $v_{n_k}\to v$ weakly for some $x, z\in \mathcal{P}$ and $v\in \mathcal{Q}.$ Then $u_{n_k}-v_{n_k}\to u-v$ and $z_{n_k}-v_{n_k}\to z-v$ weakly. Since the norm is weakly lower semicontinuous, we have $\|u_{n_k}-v_{n_k}\|\to \|u-v\|$ and $\|z_{n_k}-v_{n_k}\|\to \|z-v\|.$ By the Kadec-Klee property of $\mathcal{E},$ it is clear that $u_{n_k}-v_{n_k}\to u-v$ and $z_{n_k}-v_{n_k}\to z-v.$ Thus $u_{n_k}-z_{n_k}\to u-z.$ Moreover, $\|u-v\|=d(\mathcal{P},\mathcal{Q})=\|z-v\|.$ By $(b), u=z.$ Hence $\|u_{n_k}-z_{n_k}\|\to 0.$ By applying the earlier argument, we have $\|u_n-z_n\|\to 0.$
 To see $(e) \Rightarrow  (b),$ suppose $u,z\in \mathcal{P}$ and $v\in \mathcal{Q}$ such that $\|u-v\|=d(\mathcal{P},\mathcal{Q})=\|z-v\|.$ Set $\mathcal{P}=\overline{co}(\{u,z\})$ and $\mathcal{Q}=\{v\}.$ Then $(\mathcal{P},\mathcal{Q})$ is a closed bounded weak approximatively compact convex pair. By property UC, we have $u=z.$

\noindent $(a)\Leftrightarrow (f):$ Suppose $\mathcal{E}$ is strictly convex and $(\mathcal{P},\mathcal{Q})$ is a non-empty pair with $\mathcal{P}$ convex. For $u, z\in \mathcal{P}, u\neq z$ and $v\in \mathcal{Q}$ such that $\|u-v\|=d(v,\mathcal{P})=\|z-v\|.$ By strict convexity, $\left\|\frac{u+z}{2}-v\right\|<d(v,\mathcal{P}),$ a contradiction. Conversely, suppose that $\mathcal{E}$ is not strictly convex. Then there exist $ c_1, c_2\in S_X, c_1\neq c_2$ such that $\{tc_1+(1-t)c_2: 0\leq t\leq 1\}\subseteq S_\mathcal{E}.$ Set $\mathcal{C}=\{tc_1+(1-t)c_2: 0\leq t\leq 1\}$ and $\mathcal{D}=\{0\}.$ It is clear that $(\mathcal{C},\mathcal{D})$ contradicts $(f).$

\noindent $(a)\Leftrightarrow (g):$ Suppose $\mathcal{E}$ is not strictly convex. Then as mentioned above there exists a set $\mathcal{P}=\{tc_1+(1-t)c_2: 0\leq t\leq 1\}\subseteq S_\mathcal{E}$ where $c_1, c_2\in S_\mathcal{E}$ with $c_1\neq c_2.$ We see that all elements of $\mathcal{P}$ are of minimum norm. This contradicts $(g).$ This proves $(g)\Rightarrow (a).$ The proof for $(a)\Rightarrow (g)$ follows from Theorem \ref{Thm1:EldredRaj2014}. 

\noindent $(a)\Leftrightarrow (h):$ Suppose $\mathcal{E}$ is not strictly convex. Then as mentioned above there exists a set $\mathcal{Q}=\{tc_1+(1-t)c_2: 0\leq t\leq 1\}\subseteq S_\mathcal{E}$ where $c_1, c_2\in S_\mathcal{E}$ with $c_1\neq c_2.$  Make $\mathcal{P}=\{0\}.$ Then $\mathcal{P}_0=\mathcal{P}$ and $\mathcal{Q}_0=\mathcal{Q}.$ Moreover, $\|P_\mathcal{P}(c_1)-P_\mathcal{P}(c_2)\|=0\neq \|c_1-c_2\|.$ Thus $P_\mathcal{P}$ is not an isometry. This proves $(h)\Rightarrow (a).$
The proof for $(a)\Rightarrow (h)$ follows from Theorem \ref{Thm2:EldredRaj2014}.

\noindent $(a)\Leftrightarrow (i):$ Suppose $X$ is strictly convex and $(\mathcal{P},\mathcal{Q})$ is a non-empty convex pair of $\mathcal{E}.$ Let $v_1, v_2\in \mathcal{Q}$ and $u\in \mathcal{P}$ such that $v_1, v_2\in \mathbb{B}(u, d(\mathcal{P},\mathcal{Q})).$ Then $\|u-v_i\|= d(\mathcal{P},\mathcal{Q}), 1\leq i\leq 2.$ By strict convexity, $v_1=v_2.$ Conversely, suppose that $\mathcal{E}$ is not strictly convex. Then there is a set $\mathcal{C}=\{tc_1+(1-t)c_2: 0\leq t\leq 1\}\subseteq S_\mathcal{E}$ where $c_1, c_2\in S_\mathcal{E}$ with $c_1\neq c_2.$ Set $\mathcal{D}=\{0\}.$ Then $d(\mathcal{C},\mathcal{D})=1$ and the closed unit ball $\mathbb{B}(0,1)$ contains $\mathcal{C}$ in the boundary, contradicting $(i).$ 
\end{proof}

\section{On proximal uniform normal structure and best proximity pairs}

We fix a few notations for a non-empty bounded convex pair $(\mathcal{P},\mathcal{Q})$ of subsets of a Banach space $\mathscr{E}.$ 
For $u\in \mathcal{P},~r(u,\mathcal{Q}) =\sup\{\|u-v\|:v\in \mathcal{Q}\}.$ We denote $d(\mathcal{P},\mathcal{Q})=\inf\{\|z-w\|:z\in \mathcal{P}, w\in \mathcal{Q}\},~\delta(\mathcal{P},\mathcal{Q})=\sup\{\|u-v\|:u\in \mathcal{P}, v\in \mathcal{Q}\},~ r(\mathcal{P},\mathcal{Q})=\inf\{r(z, \mathcal{Q}):z\in \mathcal{P}\},$ $R(\mathcal{P},\mathcal{Q})=\max\{r(\mathcal{P},\mathcal{Q}), r(\mathcal{Q}, \mathcal{P})\},$ $\mathcal{P}_0=\{a\in \mathcal{P}: \|a-b\|=d(\mathcal{P},\mathcal{Q})~\mbox{for some}~b\in \mathcal{Q}\}$ and $\mathcal{Q}_0=\{b\in \mathcal{Q}: \|b-a\|=d(\mathcal{P},\mathcal{Q})~\mbox{for some}~a\in \mathcal{P}\}.$ Also, we denote $\overline{co}(\mathcal{P})$ by the closed and convex hull of $\mathcal{P}$.  
The pair $(\mathcal{P},\mathcal{Q})$ is proximal (resp., semisharp proximal) (\cite{Espinola2008, Raju2011}) if for $(u,v)\in (\mathcal{P},\mathcal{Q})$ there exists one (resp., at most one) pair $(z,w)\in (\mathcal{P},\mathcal{Q})$ for which $\|u-w\|=d(\mathcal{P},\mathcal{Q})=\|v-z\|$. If such a pair $(z,w)$ is unique for given any $(u,v) \in (\mathcal{P},\mathcal{Q})$, then $(\mathcal{P},\mathcal{Q})$ is said to be a sharp proximal pair and $(z,w)$ is denoted by $(v',u')$. We denote a collection $\Upsilon (\mathcal{P},\mathcal{Q})$ consisting of non-empty bounded closed convex proximal pairs $(A_1,B_1)$ of $(\mathcal{P},\mathcal{Q})$ with at least one of $A_1$ and $B_1$ contains more than one point and $d(A_1,B_1)=d(\mathcal{P},\mathcal{Q}).$ According to \cite{AbhikRaju2021}, the pair $(\mathcal{P},\mathcal{Q})$ is said to have proximal uniform normal structure (\cite[Definition 5.1, p. 9]{AbhikRaju2021}) if $N(\mathcal{P},\mathcal{Q})=\displaystyle \sup \frac{R(A_1,B_1)}{\delta(A_1,B_1)} < 1.$ 
Here the supremum is taken over all $(A_1,B_1)$ in $\Upsilon(\mathcal{P},\mathcal{Q}).$
 If $d(\mathcal{P},\mathcal{Q})=0$, then proximal uniform normal structure turns out to be uniform normal structure (which was introduced in \cite{GW-UNS-1979}) and the existence of the best proximity pair (\cite[Theorem 5.8, p. 11]{AbhikRaju2021}) boils down to the corresponding existing fixed point result (\cite[Theorem 7, p. 1234]{LImXu1995}). Hence, throughout this note, without loss generality we assume that $d(\mathcal{P},\mathcal{Q})>0$.

Let $(A_1,B_1)$ be an element in $\Upsilon(\mathcal{P},\mathcal{Q}).$ Then it is possible to construct a sequence $\{(A_n, B_n)\}$ in $\Upsilon(A_1,B_1)$ 
such that $\delta(A_n)=\delta(B_n)\to 0.$
 Consequently, $R(A_n, B_n)\leq R(A_n)+ d(\mathcal{P},\mathcal{Q})\leq \delta(A_n)+ d(\mathcal{P},\mathcal{Q})\to d(\mathcal{P},\mathcal{Q})$ and $\delta(A_n, B_n)\leq \delta(A_n)+ d(\mathcal{P},\mathcal{Q})\to d(\mathcal{P},\mathcal{Q}).$ Hence $N(\mathcal{P},\mathcal{Q})=1$ and so, no $(\mathcal{P},\mathcal{Q})$ can have proximal uniform normal structure. Thus to enrich the family of non-empty pairs that satisfy proximal uniform normal structure, we reform the notion.

 \begin{definition}\label{PNUS}
 A non-empty bounded convex pair $(\mathcal{P},\mathcal{Q})$ of a Banach space $\mathscr{E}$ is said to have proximal uniform normal structure if $$\mathcal{N}(\mathcal{P},\mathcal{Q})=\displaystyle\sup\left\{\frac{R(A_1, A_2)-d(A_1,A_2)}{\delta(A_1, A_2)-d(A_1,A_2)}:(A_1, A_2)\in\Upsilon (\mathcal{P},\mathcal{Q}) \right\}<1.$$
 \end{definition}
 
 It is easy to observe that if a non-empty bounded convex pair $(\mathcal{P},\mathcal{Q})$ has proximal uniform normal structure as defined in Definition \ref{PNUS}, then it has proximal normal structure (\cite{Eldred2005}). 
 We say that a Banach space $\mathscr{E}$ has proximal uniform normal structure if every non-empty closed bounded convex proximal pair of $\mathscr{E}$ has proximal uniform normal structure.
It is known (\cite{AbhikRaju2021}) that if a non-empty closed bounded convex pair in a Banach space has proximal uniform normal structure, then it is semisharp proximal. Thus 
in view of Theorem \ref{Thm2.1:StrictConvexityEquivalence}, we have the following theorem. 
 \begin{theorem}\label{Thm:StrictConvexity}
 Let $X$ be a Banach space having proximal uniform normal structure. Then $X$ is strictly convex.
 \end{theorem}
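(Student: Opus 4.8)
The plan is to argue by contradiction and reduce the statement to the explicit line-segment construction already used for the implication $(b)\Rightarrow(a)$ in Theorem \ref{Thm2.1:StrictConvexityEquivalence}, feeding it with the hypothesis through the quoted fact that a pair with proximal uniform normal structure is semisharp proximal. So first I would suppose that $X$ is \emph{not} strictly convex. By the definition of strict convexity, the unit sphere $S_X$ then contains a nontrivial line segment $\mathcal{Q}=\{tc_1+(1-t)c_2:0\leq t\leq 1\}$ with $c_1,c_2\in S_X$ and $c_1\neq c_2$.

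Next I would set $\mathcal{P}=\{0\}$ and verify that $(\mathcal{P},\mathcal{Q})$ is a non-empty closed bounded convex \emph{proximal} pair. Since $\mathcal{Q}\subseteq S_X$ we have $\|0-q\|=1$ for every $q\in\mathcal{Q}$, so $d(\mathcal{P},\mathcal{Q})=1$; moreover every point of $\mathcal{P}$ and of $\mathcal{Q}$ attains this distance, whence $\mathcal{P}_0=\mathcal{P}$ and $\mathcal{Q}_0=\mathcal{Q}$, and the pair is proximal. This check is the only genuine point of care, and it is where the main (conceptual rather than computational) obstacle lies: the hypothesis ``$X$ has proximal uniform normal structure'' and the quoted implication both concern closed bounded convex \emph{proximal} pairs, not arbitrary pairs, so one must confirm that the simple segment-based pair actually falls inside that class before either may be applied to it.

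Finally, since $X$ has proximal uniform normal structure, the pair $(\mathcal{P},\mathcal{Q})$ inherits proximal uniform normal structure, and by the result of \cite{AbhikRaju2021} quoted just above the theorem it is therefore semisharp proximal. But $\|0-c_1\|=d(\mathcal{P},\mathcal{Q})=\|0-c_2\|$ with $c_1\neq c_2$ directly contradicts semisharp proximality of $(\mathcal{P},\mathcal{Q})$. Hence $X$ must be strictly convex. In effect this reproduces the $(b)\Rightarrow(a)$ step of Theorem \ref{Thm2.1:StrictConvexityEquivalence}, with semisharp proximality now supplied by the chain ``proximal uniform normal structure $\Rightarrow$ semisharp proximal'' rather than assumed outright; equivalently, one may phrase the argument as: the hypothesis forces every closed bounded convex proximal pair of $X$ to be semisharp proximal, and the segment construction shows this already entails strict convexity via Theorem \ref{Thm2.1:StrictConvexityEquivalence}.
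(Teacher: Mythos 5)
Your proof is correct and is essentially the paper's own argument: the paper likewise combines the quoted fact of \cite{AbhikRaju2021} (proximal uniform normal structure implies semisharp proximality) with the $(b)\Rightarrow(a)$ implication of Theorem \ref{Thm2.1:StrictConvexityEquivalence}, whose proof is exactly your segment construction $\mathcal{P}=\{0\}$, $\mathcal{Q}=\{tc_1+(1-t)c_2:0\leq t\leq 1\}$. Your explicit verification that this pair is proximal is a worthwhile added detail the paper leaves implicit, since the hypothesis and the quoted fact apply to proximal pairs whereas Theorem \ref{Thm2.1:StrictConvexityEquivalence}(b) is stated for arbitrary closed bounded convex pairs.
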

Let $(A,B)$ be as given in Example 5.2 of \cite{AbhikRaju2021}. It can be verified (similar computations as in Example 5.2 of \cite{AbhikRaju2021}) that for $(A_1,B_1)\in \Upsilon(A,B),$ there is a constant $c<1$ (independent of $(A_1,B_1)$) such that $\displaystyle\frac{R(A_1,B_1)^2-d(A_1,B_1)^2}{\delta(A_1,B_1)^2-d(A_1,B_1)^2}= \frac{R(B_1,B_1)^2}{\delta(B_1,B_1)^2}\leq \displaystyle \frac{c^2 \delta(B_1,B_1)^2}{\delta(B_1,B_1)^2}.$ Hence $(A,B)$ has proximal uniform normal structure.

It should be observed that Lemma 5.5 of \cite{AbhikRaju2021} holds true with the revised notion of proximal uniform normal structure. We mention that we need to modify Lemma 5.6 and Proposition 5.7 of \cite{AbhikRaju2021}. 
 \begin{proposition}\label{propertyP}
 Let $(\mathcal{P},\mathcal{Q})$ be a non-empty weakly compact and convex pair in a Banach space $\mathscr{E}.$ Suppose $\{z_n\}$ and $\{u_n\}$ are two sequences in $\mathcal{P}$ and $\{v_n\}$ is a sequence in $\mathcal{Q}.$ Then there exists $v_0\in \mathcal{Q}$ such that $\displaystyle\max\{\limsup_{n}\|z_n-v_0\|, \limsup_{n}\|u_n-v_0\|\} \leq \limsup_{i}\max\{\limsup_{n}\|z_n-v_i\|, \limsup_{n}\|u_n-v_i\|\}.$
 \end{proposition}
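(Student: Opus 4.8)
The plan is to reduce the statement to the weak lower semicontinuity of a single asymptotic radius function on $\mathcal{Q}$, combined with the weak sequential compactness of $\mathcal{Q}$. Since $(\mathcal{P},\mathcal{Q})$ is bounded (being weakly compact), for each $v\in\mathcal{Q}$ the quantity
\[
\phi(v)=\max\Bigl\{\limsup_{n}\|z_n-v\|,\ \limsup_{n}\|u_n-v\|\Bigr\}
\]
is finite, and the right-hand side of the assertion is precisely $\limsup_i\phi(v_i)$. Thus the goal becomes: produce $v_0\in\mathcal{Q}$ with $\phi(v_0)\le \limsup_i \phi(v_i)$.

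First I would record the elementary regularity of $\phi$. For each fixed $n$ the map $v\mapsto\|z_n-v\|$ is convex and $1$-Lipschitz; forming $\sup_{n\ge N}$ and then the infimum over $N$ (that is, the $\limsup$) preserves both convexity and the $1$-Lipschitz estimate. Hence $f(v)=\limsup_n\|z_n-v\|$ and $g(v)=\limsup_n\|u_n-v\|$ are convex and $1$-Lipschitz on $\mathcal{Q}$, so $\phi=\max\{f,g\}$ is convex and $1$-Lipschitz, in particular strongly continuous.

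The key step is to upgrade strong continuity to weak lower semicontinuity. Because $\phi$ is convex and strongly continuous, each sublevel set $\{v\in\mathcal{Q}:\phi(v)\le c\}$ is convex and strongly closed, hence weakly closed by Mazur's theorem; therefore $\phi$ is weakly lower semicontinuous on $\mathcal{Q}$. I expect this to be the main obstacle, since the $\limsup$ of the (individually weakly lower semicontinuous) maps $v\mapsto\|z_n-v\|$ need not itself be weakly lower semicontinuous in general — it is precisely the convexity of $\phi$ that rescues the argument.

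With weak lower semicontinuity in hand the conclusion is immediate. Let $R=\limsup_i\phi(v_i)$ and pass to a subsequence $\{v_{i_k}\}$ with $\phi(v_{i_k})\to R$. Since $\mathcal{Q}$ is weakly compact, it is weakly sequentially compact by the Eberlein--\v{S}mulian theorem, so a further subsequence satisfies $v_{i_{k_j}}\rightharpoonup v_0$ for some $v_0\in\mathcal{Q}$ (the weak limit remains in $\mathcal{Q}$ since $\mathcal{Q}$ is weakly closed). Weak lower semicontinuity of $\phi$ then yields $\phi(v_0)\le\liminf_j\phi(v_{i_{k_j}})=R=\limsup_i\phi(v_i)$, which is exactly the required inequality.
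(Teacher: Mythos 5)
Your proof is correct and follows essentially the same route as the paper's: both rest on the weak lower semicontinuity of the asymptotic-radius function $\phi=\max\{h_1,h_2\}$ (where $h_1(v)=\limsup_n\|z_n-v\|$, $h_2(v)=\limsup_n\|u_n-v\|$) together with weak (sequential) compactness of $\mathcal{Q}$ to extract a weak limit point $v_0$ along a suitable subsequence. The only difference is one of detail rather than substance: you actually justify the weak lower semicontinuity --- via convexity of $\phi$ (which holds because the $\limsup$ is a \emph{decreasing} limit of convex $1$-Lipschitz functions, not a general infimum) and Mazur's theorem --- a step the paper merely asserts, and you locate $v_0$ in $\mathcal{Q}$ whereas the paper places it in the intersection $\bigcap_{n}\overline{co}\left(\{v_i:i\geq n\}\right)$, a refinement not needed for the stated inequality.
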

 \begin{proof}
 Set $C = \displaystyle\bigcap_{n=1}^{\infty}\overline{co}\left(\{v_i:i\geq n\}\right).$
 By weak compactness $C\neq \emptyset$. As the function $h_1(x)=\displaystyle\limsup_{n}\|z_n-x\|$ and $h_2(x)=\displaystyle\limsup_{n}\|u_n-x\|$ are weakly lower semicontinuous, the function $h(x)=\max\{h_1(x), h_2(x)\}$ is pointwise weakly lower semicontinuous. Thus there exist a subsequence $\{v_{k_i}\}$ and a point $v_0$ in $C$ such that $$\displaystyle h(v_0)\leq \liminf_{i} h(v_{k_i})\leq \limsup_{i} h(v_i)=\limsup_{i}\max\{h_1(v_i), h_2(v_i)\}.$$ This completes the proof.
 \end{proof}
 \begin{lemma}\label{Main Lemma 1}
Let $(\mathcal{P},\mathcal{Q})$ be a non-empty pair as in Proposition \ref{propertyP}.
 Suppose $(\mathcal{P},\mathcal{Q})$ has proximal uniform normal structure. 
  Let $\{u_n\}$ be a sequence in $\mathscr{E}$ such that $\{u_{2n}\}\subseteq \mathcal{P}$ and $\{u_{2n+1}\}\subseteq \mathcal{Q}$. Then there exists $x\in \mathcal{P}_0$ such that
  \begin{itemize}
 \item[(i)] $\displaystyle\max\{\limsup_{n} \left\|x-u_{2n}'\right\|, \limsup_{n} \left\|x-u_{2n+1}\right\|\}-d(\mathcal{P},\mathcal{Q})\\
 \leq \mathcal{N}(\mathcal{P},\mathcal{Q})\left(\delta\left(\{u_{2n}, u_{2n+1}'\}, \{u_{2n}', u_{2n+1}\}\right)-d(\mathcal{P},\mathcal{Q})\right);$
 \item[(ii)] $\|x-v\|\leq \displaystyle \max\{\limsup_{j}\|v-u_{2j}\|, \limsup_{j}\|v-u_{2j+1}'\|\}$ for all $v\in \mathcal{Q};$
 \end{itemize}
 \end{lemma}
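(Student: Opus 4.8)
The plan is to realise $x$ as a common ``asymptotic centre'' of a shrinking family of tail subpairs, and then to read off (i) and (ii) from its two defining features. First I would record the standing consequences of the hypotheses: since $(\mathcal{P},\mathcal{Q})$ has proximal uniform normal structure it is semisharp, hence sharp, proximal, so the primes $u_{2n}'\in\mathcal{Q}$ and $u_{2n+1}'\in\mathcal{P}$ are well defined (the relevant points lying in $\mathcal{P}_0$, resp.\ $\mathcal{Q}_0$), and by Theorem \ref{Thm:StrictConvexity} the space is strictly convex; moreover weak compactness makes $\mathcal{P}_0$ a non-empty closed convex set containing all the generators $u_{2j}$ and $u_{2j+1}'$. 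For each $i\ge 1$ I set
\[
A_1^{(i)}=\overline{co}\{u_{2j},u_{2j+1}':j\ge i\}\subseteq\mathcal{P},\qquad A_2^{(i)}=\overline{co}\{u_{2j}',u_{2j+1}:j\ge i\}\subseteq\mathcal{Q}.
\]
Each $u_{2j}\in A_1^{(i)}$ pairs with $u_{2j}'\in A_2^{(i)}$ at distance $d(\mathcal{P},\mathcal{Q})$, and $u_{2j+1}'$ with $u_{2j+1}$, so $d(A_1^{(i)},A_2^{(i)})=d(\mathcal{P},\mathcal{Q})$. I would check that these tail pairs lie in $\Upsilon(\mathcal{P},\mathcal{Q})$ by the (revised) Lemma 5.5 of \cite{AbhikRaju2021}, or directly: by Theorem \ref{Thm2.1:StrictConvexityEquivalence}(h) the prime map is a surjective, hence affine, isometry of $\mathcal{P}_0$ onto $\mathcal{Q}_0$, so $A_2^{(i)}$ is precisely the prime image of $A_1^{(i)}$ and the pair is proximal. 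The degenerate case, where the generators collapse to a single proximal pair, makes (i)--(ii) immediate and may be set aside.

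Next, for each $i$ I would produce a Chebyshev centre of the tail pair. The map $w\mapsto\sup_{v\in A_2^{(i)}}\|w-v\|$ is convex and continuous, hence weakly lower semicontinuous, and $A_1^{(i)}$ is weakly compact, so it attains its infimum at some $w_i\in A_1^{(i)}$ with
\[
\sup_{v\in A_2^{(i)}}\|w_i-v\|=r(A_1^{(i)},A_2^{(i)})\le R(A_1^{(i)},A_2^{(i)})\le d(\mathcal{P},\mathcal{Q})+\mathcal{N}(\mathcal{P},\mathcal{Q})\bigl(\delta(A_1^{(i)},A_2^{(i)})-d(\mathcal{P},\mathcal{Q})\bigr),
\]
the last inequality being Definition \ref{PNUS}. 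Since $A_1^{(i)}\subseteq A_1^{(1)}$ and $A_2^{(i)}\subseteq A_2^{(1)}$, monotonicity of $\delta$ (and its invariance under closed convex hulls) gives $\delta(A_1^{(i)},A_2^{(i)})\le\delta\bigl(\{u_{2n},u_{2n+1}'\},\{u_{2n}',u_{2n+1}\}\bigr)$. Writing $F(w)=\max\{\limsup_n\|w-u_{2n}'\|,\limsup_n\|w-u_{2n+1}\|\}$ and noting that $u_{2n}',u_{2n+1}\in A_2^{(i)}$ for all $n\ge i$, I obtain $F(w_i)\le\sup_{v\in A_2^{(i)}}\|w_i-v\|$, whence
\[
F(w_i)-d(\mathcal{P},\mathcal{Q})\le\mathcal{N}(\mathcal{P},\mathcal{Q})\bigl(\delta(\{u_{2n},u_{2n+1}'\},\{u_{2n}',u_{2n+1}\})-d(\mathcal{P},\mathcal{Q})\bigr)
\]
uniformly in $i$.

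Finally I would apply Proposition \ref{propertyP}, with the roles of $\mathcal{P}$ and $\mathcal{Q}$ interchanged, to the two $\mathcal{Q}$-sequences $\{u_{2n}'\}$, $\{u_{2n+1}\}$ and the $\mathcal{P}$-sequence $\{w_i\}$: this yields $x\in C=\bigcap_m\overline{co}\{w_i:i\ge m\}$ with $F(x)\le\limsup_i F(w_i)$, and the uniform bound above then gives (i). For (ii) observe that $w_i\in A_1^{(i)}\subseteq\overline{co}\{u_{2j},u_{2j+1}':j\ge m\}$ whenever $i\ge m$, so $C\subseteq\bigcap_m\overline{co}\{u_{2j},u_{2j+1}':j\ge m\}$; hence for every $v\in\mathcal{Q}$ convexity of $\|v-\cdot\|$ gives $\|x-v\|\le\sup_{j\ge m}\max\{\|v-u_{2j}\|,\|v-u_{2j+1}'\|\}$ for all $m$, and letting $m\to\infty$ yields (ii). Since $x\in A_1^{(1)}\subseteq\mathcal{P}_0$, the point $x$ lies in $\mathcal{P}_0$ as required.

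I expect the main obstacle to be the simultaneous fulfilment of (i) and (ii): (i) forces $x$ to have small radius against the $\mathcal{Q}$-generators (a Chebyshev-centre property), while (ii) forces $x$ into the tail convex hulls of the $\mathcal{P}$-generators, and a single application of Proposition \ref{propertyP} to the raw generator sequence would only deliver the weak estimate $\delta$ in place of $R$ on the right of (i). The device of first taking centres $w_i$ of the shrinking tail pairs $(A_1^{(i)},A_2^{(i)})$, and only afterwards extracting the limit point $x$ through Proposition \ref{propertyP}, is what reconciles the two. The remaining care points are verifying that the tail pairs genuinely sit in $\Upsilon(\mathcal{P},\mathcal{Q})$ (proximality and $d(A_1^{(i)},A_2^{(i)})=d(\mathcal{P},\mathcal{Q})$) and that the monotonicity $\delta(A_1^{(i)},A_2^{(i)})\le\delta(A_1^{(1)},A_2^{(1)})$ lets the bound be phrased in terms of the full generator sets.
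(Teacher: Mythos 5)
Your proof is correct and follows essentially the same route as the paper's: your tail pairs $A_1^{(i)},A_2^{(i)}$ are exactly the paper's $\mathcal{P}_{2i},\mathcal{P}_{2i}'$, your centers $w_i$ play the role of the paper's approximate centers $x_{2l}$ (the paper takes $\epsilon_l$-approximate minimizers rather than invoking attainment via weak lower semicontinuity), and the final extraction of $x$ by applying Proposition \ref{propertyP} to the sequence of centers, with (ii) coming from membership in $\bigcap_{m}\overline{co}\left(\{u_{2j},u_{2j+1}':j\geq m\}\right)$, is precisely the paper's argument. The only differences are points you make explicit that the paper leaves implicit (membership of the tail pairs in $\Upsilon(\mathcal{P},\mathcal{Q})$ via the affine isometry, the degenerate case, and $x\in\mathcal{P}_0$), which is to your credit.
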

 \begin{proof}
  Set $\mathcal{P}_{2l}=\overline{co}\left(\{u_{2j}, u_{2j+1}' :j\geq l\}\right)$ and $\mathcal{P}_{2l}'=\overline{co}\left(\{u_{2j+1}, u_{2j}':j\geq l\}\right),$ $l\geq 1$. Then $\{\mathcal{P}_{2l}\}$ (similarly, $\{\mathcal{P}_{2l}'\}$) is a decreasing sequence of non-empty closed and convex subsets of $\mathcal{P}.$ By weak compactness, the set $\mathcal{P}^{0}=\displaystyle \bigcap_{l=1}^{\infty}\mathcal{P}_{2l}\neq \emptyset.$ 
 For any $x\in \mathcal{P}^{0}$ and $v\in \mathcal{Q},$ $\|v-x\|\leq r(v, \mathcal{P}_{2l})=\displaystyle \max\{\sup_{j\geq l}\|v-u_{2j}\|, \sup_{j\geq l}\|v-u_{2j+1}'\|\}.$ Hence $\|v-x\|\leq \displaystyle \max\{\limsup_{j}\|v-u_{2j}\|, \limsup_{j}\|v-u_{2j+1}'\|\}.$ This proves (ii).
 To prove (i), without loss of any generality, let's assume that $\delta(\mathcal{P},\mathcal{Q})>d(\mathcal{P},\mathcal{Q}).$ For $l\in \mathbb{N},$ choose $\epsilon _l>0$ such that $\displaystyle \lim_{l\to \infty} \epsilon_l =0.$ As $\delta\left(\mathcal{P}_{2l},\mathcal{P}_{2l}'\right)\leq \delta\left(\{u_{2l}, u_{2l+1}'\}, \{u_{2l}', u_{2l+1}\}\right)$, for $l\geq 1,$ there exists $x_{2l}\in \mathcal{P}_{2l}$ such that
$ r(x_{2l},\mathcal{P}_{2l}') -d(\mathcal{P},\mathcal{Q})
 < R(\mathcal{P}_{2l},\mathcal{P}_{2l}')-d(\mathcal{P},\mathcal{Q})+\epsilon_l.$
 Hence 
\begin{eqnarray*}
 \max\{\displaystyle\limsup_{j}\left\|x_{2l}-u_{2j}'\right\|, \limsup_{j}\left\|x_{2l}-u_{2j+1}\right\|\}-d(\mathcal{P},\mathcal{Q})\\
  \leq \mathcal{N}(\mathcal{P},\mathcal{Q}) \left(\delta \left(\{u_{2l}, u_{2l+1}'\}, \{u_{2l}', u_{2l+1}\}\right)-d(\mathcal{P},\mathcal{Q})\right) +\epsilon_l.
\end{eqnarray*} 
 As $F=\displaystyle \bigcap_{j=1}^{\infty}\overline{co}\{x_{2i}:i\geq j\}\neq \emptyset,$ by Proposition \ref{propertyP}, there is $w\in F$ satisfying 
\begin{eqnarray*}
&&\displaystyle \max\{\limsup_{j}\|w-u_{2j}'\|, \limsup_{j}\|w-u_{2j+1}\|\}-d(\mathcal{P},\mathcal{Q})\\
&&\leq \mathcal{N}(\mathcal{P},\mathcal{Q}) (\delta \left(\{u_{2l}, u_{2l+1}'\}, \{u_{2l}', u_{2l+1}\}\right)-d(\mathcal{P},\mathcal{Q})).
\end{eqnarray*} 
  This proves (i). 
 \end{proof} 
 Let $(\mathcal{P}, \mathcal{Q})$ be a non-empty closed bounded convex proximal pair in a Banach space $\mathcal{E}.$ Suppose $\mathcal{E}$ has proximal uniform normal structure. Let $x,y\in \mathcal{P}.$ By strict convexity (using Theorem \ref{Thm:StrictConvexity}) we have $\|x-y'\|=\|x'-y\|.$ In the below we use this fact to prove the main theorem of \cite{AbhikRaju2021}.

\noindent {\it Revised Proof of Theorem 5.8 of \cite{AbhikRaju2021}.} 
Replace $(A,B)$ by $(\mathcal{P}, \mathcal{Q})$ in Theorem 5.8 of \cite{AbhikRaju2021}.
 Let $u_0\in \mathcal{P}_0.$ Then $\{T^{2m}u_0\}\subseteq \mathcal{P}_0,$ $ \{T^{2m+1}u_0\}\subseteq \mathcal{Q}_0.$ Then there exists $u_1\in \mathcal{P}_0$ that satisfies Lemma \ref{Main Lemma 1} by replacing $x$ by $u_1$ and $(u_{2m}, u_{2m+1})$ by $\left(T^{2m}u_0, T^{2m+1}u_0\right).$ By induction, we can find a sequence $\{u_i\}$ in $\mathcal{P}_0$ such that, for $i\in \mathbb{N},$
 {\footnotesize{\begin{align}\label{eqn4.3.4}
\begin{split}
 & \displaystyle\max\{\displaystyle \limsup_{m} \|u_i- T^{2m}u_{i-1}'\|,\limsup_{m} \|u_i- T^{2m+1}u_{i-1}\|\}-d(\mathcal{P},\mathcal{Q})\\ & \leq \mathcal{N}(\mathcal{P},\mathcal{Q})\left(\delta\big(\{T^{2m}u_{i-1}, T^{2m+1}u_{i-1}'\}, \{T^{2m}u_{i-1}', T^{2m+1}u_{i-1}\}\big)-d(\mathcal{P},\mathcal{Q})\right);\\ 
& \|u_i-v\|\leq \max\{\displaystyle\limsup_{m}\|T^{2m}u_{i-1}-v\|, \limsup_{m}\|T^{2m+1}u_{i-1}'-v\|\}~~\mbox{for all}~v~\mbox{in}~\mathcal{Q}.
\end{split}
\end{align}}}
 Set $g_i(\mathcal{P})=\max\{\displaystyle \limsup_{j} \|u_{i+1}- T^{2j}u_{i}'\|,\limsup_{j} \|u_{i+1}- T^{2j+1}u_{i}\|\}.$ 
 Using (\ref{eqn4.3.4}) and similar techniques used in the proof of Theorem 5.8 of \cite{AbhikRaju2021}, we get $$g_i(\mathcal{P})-d(\mathcal{P},\mathcal{Q})\leq k^{2}\left[g_{i-1}(\mathcal{P})-d(\mathcal{P},\mathcal{Q})\right]\leq \cdots \leq \left(\mathcal{N}(\mathcal{P},\mathcal{Q})\cdot k^2\right)^i [g_{0}(\mathcal{P})-d(\mathcal{P},\mathcal{Q})].$$
Thus, $\max\left\{\displaystyle\lim_{i\to \infty} g_i(\mathcal{P}), \displaystyle \lim_{i\to \infty}\delta\big(\{T^{2m}u_{i}, T^{2m+1}u_{i}'\}, \{T^{2m}u_{i}', T^{2m+1}u_{i}\}\big)\right\}$ $ =d(\mathcal{P},\mathcal{Q}).$ Substituting $v=u_{i}'$ in (\ref{eqn4.3.4}), we have
 $\displaystyle\limsup_{m}\|T^{2m}u_{i-1}-u_{i}'\|\to d(\mathcal{P},\mathcal{Q})$ and $\displaystyle \limsup_{m}\|T^{2m+1}u_{i-1}'-u_{i}'\|\}\to d(\mathcal{P},\mathcal{Q})$ as $i\to \infty.$
  By weak compactness, $ K =\displaystyle \bigcap_{j=1}^{\infty}\overline{co}(\{u_i':i\geq j\})\neq \emptyset.$
  By Proposition \ref{propertyP}, there exists $w\in K$ and a subsequence $\{u_{i_j}\}$ such that $\displaystyle \lim_{j\to \infty} \|w-u_{i_j}\| = d(\mathcal{P},\mathcal{Q}).$ By a similar technique used in the proof of Theorem 5.8 of \cite{AbhikRaju2021}, we get a point $z_0\in H=\displaystyle \bigcap_{j=1}^{\infty}\overline{co}(\{u_{i_j}:i\geq j\})$ such that $\|z_0-Tz_0\|=d(\mathcal{P},\mathcal{Q}).$

%
%
%


\end{document}